\title{Minkowski type theorems for convex sets in cones}
\author{Rolf Schneider}
\date{}
\newcommand{\Sd}{{\mathbb S}^{d-1}}
\newcommand{\R}{{\mathbb R}}
\newcommand{\Rd}{{\mathbb R}^d}
\newcommand{\N}{{\mathbb N}}
\newcommand{\Ha}{\mathcal{H}}
\newcommand{\B}{\mathcal{B}}
\newcommand{\D}{{\rm d}}
\newtheorem{theorem}{Theorem}
\newtheorem{lemma}{Lemma}
\begin{document}
\maketitle

\begin{abstract}
Minkowski's classical existence theorem provides necessary and sufficient conditions for a Borel measure on the unit sphere of Euclidean space to be the surface area measure of a convex body. The solution is unique up to a translation. We deal with corresponding questions for unbounded convex sets, whose behavior at infinity is determined by a given closed convex cone. We provide an existence theorem and a stability result.
\\[1mm]
{\em Keywords: Convex cone, coconvex set, surface area measure, Minkowski's existence theorem, stability}  \\[1mm]
2020 Mathematics Subject Classification: Primary 52A20
\end{abstract}

\section{Introduction}\label{sec1}

Minkowski's existence theorem is one of the classical results of convex geometry. It provides necessary and sufficient conditions for a Borel measure on the unit sphere of Euclidean space to be the surface area measure of a convex body. More precisely, let $\Sd$ be the unit sphere of Euclidean space $\R^d$ (with scalar product $\langle\cdot\,,\cdot\rangle$, norm $\|\cdot\|$, and origin $o$), and let $\B(\Sd)$ be its $\sigma$-algebra of Borel sets. Let $K\subset\R^d$ ($d\ge 2$) be a convex body (a nonempty compact convex set). For a Borel set $\omega\in\B(\Sd)$, let $\tau(K,\omega)$ be the reverse spherical image of $K$ at $\omega$, that is, the set of all boundary points of $K$ at which there exists an outer normal vector falling in $\omega$. Then $S_{d-1}(K,\omega):= \Ha^{d-1}(\tau(K,\omega))$, where $\Ha^{d-1}$ denotes the $(d-1)$-dimensional Hausdorff measure, defines the surface area measure $S_{d-1}(K,\cdot)$ of the convex body $K$. Minkowski's theorem says that a Borel measure $\varphi$ on $\Sd$ is the surface area measure of some convex body $K$ if and only if $\varphi$ is finite, not concentrated on a great subsphere, and satisfies $\int_{\Sd} u\,\varphi(\D u)=o$. Moreover, $K$ is uniquely determined up to a translation. For historical references, reproductions of proofs, and generalizations, we refer to \cite{Sch14}, Sections 8.2 and 9.2.

The surface area measure makes sense for unbounded convex sets, too. Let $K\subset \R^d$ be a nonempty closed convex set, which is not necessarily bounded. Its spherical image $S_K$ is defined as the set of all unit vectors that are outer normal vectors of $K$ at boundary points. Then $S_K$ is a spherically convex set, contained in a closed halfsphere if $K$ is unbounded. For Borel sets $\omega\in\B(S_K)$, the reverse spherical image $\tau(K,\omega)$ and the surface area measure $S_{d-1}(K,\omega)$ at $\omega$ can be defined as above. The total measure $S_{d-1}(K,S_K)$ is no longer finite if $K$ is unbounded.

In this paper, we are interested in Minkowski type theorems for unbounded convex sets, which are contained in a given convex cone and are such that their behavior at infinity is determined by the cone. Such a question can be viewed as a Minkowski problem with a boundary condition. We recall that the support function of a nonempty closed convex set $K\subset\R^d$ is defined by
$$ h(K,x):= \sup\{\langle x,y\rangle:y\in K\}\quad\mbox{for }x\in\R^d.$$
For $u$ in the relative interior of $S_K$, we have $h(K,u) =\max\{\langle x,y\rangle:y\in K\}$, which is finite. For $u$ in the relative boundary of $S_K$, it can happen that $h(K,u)$ is finite and attained, finite but not attained, or infinite. The question considered below can be viewed as a Minkowski problem with boundary condition, where a Borel measure is given on a spherically convex open set $\Omega_C\subset\Sd$ with closure in an open halfsphere, and the boundary condition requires that the support function of the solution set be zero on the boundary of $\Omega_C$. We remark that Minkowski type theorems  for unbounded polyhedra, with certain boundary conditions, are found in Alexandrov \cite[Sections 7.3, 7.4]{Ale05}. Again different versions of Minkowski problems with boundary conditions were treated, for example, by Busemann \cite{Bus59} and Oliker \cite{Oli82}.

In the following, $C\subset\R^d$ is a pointed closed convex cone (with apex at the origin) with interior points; it will be kept fixed. Let $K\subset C$ be a closed convex set. We distinguish several types of such sets. We say that $K$ is {\em $C$-asymptotic} if for $x\in{\rm bd}\,C$ the distance of $x$ from $K$ tends to zero as $\|x\|\to\infty$. If $C\setminus K$ has finite volume (Lebesgue measure), we say that $K$ is {\em $C$-close}. And finally the set $K$ is called {\em $C$-full} if $C\setminus K$ is bounded. Clearly, a $C$-full set is $C$-close, and a $C$-close set is $C$-asymptotic.

Convex sets of this kind have been considered in the literature. $C$-asymptotic convex sets have appeared, under different aspects, in the work of Gigena \cite{Gig78, Gig81}. Khovanski\u{\i} and Timorin \cite{KT14} introduced coconvex sets (essentially) as the differences $C\setminus K$, where $K$ is a $C$-full set. They were motivated by applications to algebraic geometry and singularity theory, but became interested in carrying over notions from convex geometry to coconvex sets. In particular, they introduced mixed volumes and derived Alexandrov--Fenchel inequalities for coconvex sets. Conconvex sets of finite volume were treated in \cite{Sch18}, as differences $C\setminus K$, where $K$ is a $C$-close set. In \cite{Sch18}, first Minkowski type theorems for convex sets in cones were obtained. 

To explain them, we note that the natural domain of definition for the surface area measure of a $C$-asymptotic convex set $K$ is the open subset
$$\Omega_C:= \Sd\cap{\rm int}\,C^\circ$$ 
of the unit sphere, where $C^\circ:=\{x\in \Rd:\langle x,y\rangle\le 0 \;\forall\, y\in C\}$ is the polar cone of $C$. Thus, in the following the surface area measure of $K$ is defined by $S_{d-1}(K,\omega)=\Ha^{d-1}(\tau(K,\omega))$ for Borel sets $\omega\in\B(\Omega_C)$ only. This defines a Borel measure $S_{d-1}(K,\cdot)$ on $\Omega_C$, which is not necessarily finite. The following existence theorem was proved in \cite{Sch18}. (The assumption, made there for notational reasons, that $\varphi$ be nonzero, can evidently be deleted.)

\vspace{2mm}

\noindent{\bf Theorem A.} {\em Let $\varphi$ be a finite Borel measure on $\Omega_C$ with compact support in $\Omega_C$. Then there exists a $C$-full set $K$ such that $S_{d-1}(K,\cdot)=\varphi$.}

\vspace{2mm}

We observe that, in contrast to Minkowski's classical theorem for convex bodies, the measure $\varphi$ need not have to satisfy any further condition. On the other hand, the condition of compact support is only sufficient, but not necessary: there are $C$-full sets $K$ for which the support of $S_{d-1}(K,\cdot)$ is the closure of $\Omega_C$ and thus not a compact subset of $\Omega_C$. 

Uniqueness holds, more generally, for $C$-close sets. Also the following uniqueness theorem was proved in \cite{Sch18}, by carrying over to coconvex sets of finite volume some arguments from the classical Brunn--Minkowski theory of convex bodies. 

\vspace{2mm}

\noindent{\bf Theorem B.} {\em Let $K,L$ be $C$-close sets such that $S_{d-1}(K,\cdot)=S_{d-1}(L,\cdot)$. Then $K=L$.}

\vspace{2mm}

Our first aim in this note is to prove an existence theorem in the style of Theorem A, where it is only assumed that $\varphi$ be finite.

\begin{theorem}\label{T1.1}
Let $\varphi$ be a finite Borel measure on $\Omega_C$. Then there exists a $C$-close set $K$ such that $S_{d-1}(K,\cdot)=\varphi$.
\end{theorem}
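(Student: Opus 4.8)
The plan is to approximate $\varphi$ from inside by compactly supported measures, apply Theorem~A to each, and pass to a monotone limit. Fix an increasing sequence of open sets $\Omega_1\subset\Omega_2\subset\cdots$ with each $\cl\Omega_j$ compact, $\cl\Omega_j\subset\Omega_{j+1}$, and $\bigcup_j\Omega_j=\Omega_C$; one may assume $\varphi(\bd\Omega_j)=0$, since at most countably many members of a nested family can carry positive $\varphi$-boundary. Put $\varphi_j:=\varphi\restriction\Omega_j$, so that each $\varphi_j$ is a finite Borel measure with compact support in $\Omega_C$ and $\varphi_j\uparrow\varphi$. By Theorem~A there are $C$-full sets $K_j$ with $S_{d-1}(K_j,\cdot)=\varphi_j$; write $A_j:=C\setminus K_j$ for the associated bounded coconvex sets and $h_j:=h(K_j,\cdot)\le 0$ for the support functions on $\Omega_C$.

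The decisive step is a \emph{comparison principle}: if $\varphi\le\psi$ are finite measures with compact support in $\Omega_C$, and $K_\varphi,K_\psi$ are the corresponding $C$-full solutions, then $K_\psi\subseteq K_\varphi$, i.e. $h_\psi\le h_\varphi\le 0$. This is the expected monotone dependence on the data for the Dirichlet problem attached to the Monge--Amp\`ere type operator sending a support function to its surface area measure under the boundary condition $h=0$ on $\bd\Omega_C$. I would try to establish it directly in the coconvex category, for instance through the uniqueness Theorem~B together with a suitable comparison of $K_\varphi\cap K_\psi$ and $\conv(K_\varphi\cup K_\psi)$, or by a continuity argument along the segment $t\mapsto(1-t)\varphi+t\psi$. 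Granting it, the sequence $(K_j)$ is nonincreasing, and $K:=\bigcap_j K_j$ is a closed convex subset of $C$ with $C\setminus K=\bigcup_j A_j$, the $A_j$ increasing to $C\setminus K$. The point of monotonicity is that it automatically produces a limit set and prevents the coconvex regions from escaping to infinity along $\bd C$.

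To see that $K$ is $C$-close I would use an isoperimetric-type inequality for coconvex sets: there is a constant $c=c(C,d)$ such that
\[ \Ha^d(C\setminus L)\le c\,S_{d-1}(L,\Omega_C)^{d/(d-1)} \]
for every $C$-close set $L$. (The exponent is forced by the homogeneity $x\mapsto\lambda x$, with a halfspace cut of $C$ as extremal configuration; the inequality should be obtainable from the coconvex Alexandrov--Fenchel inequalities of \cite{KT14} or proved directly.) Applying this to $L=K_j$ gives $\Ha^d(A_j)\le c\,\varphi_j(\Omega_C)^{d/(d-1)}\le c\,\varphi(\Omega_C)^{d/(d-1)}$, a bound independent of $j$. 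By monotone convergence $\Ha^d(C\setminus K)=\sup_j\Ha^d(A_j)\le c\,\varphi(\Omega_C)^{d/(d-1)}<\infty$, so $K$ is indeed $C$-close; in particular $K\ne\emptyset$, since $\Ha^d(C)=\infty$.

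Finally I would identify the surface area measure. The convergence $K_j\downarrow K$ is a local Hausdorff convergence of convex sets, under which the surface area measure is weakly continuous: for every continuous $f$ with compact support in $\Omega_C$,
\[ \int_{\Omega_C} f\,\D S_{d-1}(K_j,\cdot)\longrightarrow\int_{\Omega_C} f\,\D S_{d-1}(K,\cdot). \]
On the other hand, as soon as $\supp f\subset\Omega_j$ one has $\int f\,\D S_{d-1}(K_j,\cdot)=\int f\,\D\varphi_j=\int f\,\D\varphi$, so the left-hand side is eventually constant and equal to $\int f\,\D\varphi$. Hence $\int f\,\D S_{d-1}(K,\cdot)=\int f\,\D\varphi$ for all such $f$, which yields $S_{d-1}(K,\cdot)=\varphi$ on $\Omega_C$. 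I expect the comparison principle of the second paragraph to be the main obstacle: once monotonicity and the uniform volume bound are available, the rest is a routine limiting argument, but it is exactly the monotone dependence of the Theorem~A solution on its data that rules out escape to infinity along $\bd C$ and loss of mass to $\bd\Omega_C$, and giving a clean proof of it (rather than the Monge--Amp\`ere heuristic) is where the real work lies.
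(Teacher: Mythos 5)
Your overall strategy (truncate $\varphi$ to compactly supported pieces $\varphi_j$, solve with Theorem~A, pass to the limit, and get $C$-closeness from a uniform volume bound) is the same as the paper's, and two of your three ingredients are sound: the ``isoperimetric'' bound $V_d(C\setminus K_j)\le c\,\varphi(\Omega_C)^{d/(d-1)}$ does follow from the paper's own tools (combine the representation (\ref{2.2}) with the estimates (\ref{2.a}) and (\ref{2.b}), which give $|h(K_j,\cdot)|\le r\le c'\,\varphi(\Omega_C)^{1/(d-1)}$), and the identification $S_{d-1}(K,\cdot)=\varphi$ via weak continuity on compacta is essentially the argument the paper carries out with open and closed test sets. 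The problem is the step you yourself flag as the crux: the comparison principle ``$\varphi\le\psi$ implies $K_\psi\subseteq K_\varphi$''. This is not proved anywhere in your argument, it is not among the results available in the coconvex theory you cite, and neither of your two suggested routes is viable as stated: the surface area measures of $K_\varphi\cap K_\psi$ and of $\conv(K_\varphi\cup K_\psi)$ do not decompose in any usable way in terms of $S_{d-1}(K_\varphi,\cdot)$ and $S_{d-1}(K_\psi,\cdot)$, and the deformation argument along $(1-t)\varphi+t\psi$ presupposes differentiable dependence of the solution on the data, which is precisely the kind of regularity one does not have. Since the entire construction of the limit set and the non-escape to infinity hang on this monotonicity, the proof as written has a genuine gap.

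The fix is to drop the comparison principle altogether and replace it by a compactness argument, which is what the paper does. The uniform bound (\ref{2.b}) shows that the largest ball centered at $o$ missing ${\rm int}\,L_j$ has radius at most $t_0=t_0(C,\varphi(\Omega_C))$, so every $L_j$ meets the fixed bounded set $C_{t_1}$ with $t_1>t_0$; this is exactly the ``no escape to infinity'' statement you were trying to extract from monotonicity. One then applies the Blaschke selection theorem to the truncations $L_j\cap C_{t_k}$ for each $k$, takes a diagonal subsequence, and assembles the limit $K$ as the increasing union of the limit bodies $K_k=K\cap C_{t_k}$. After that, your volume bound and your weak-convergence identification of the surface area measure go through essentially verbatim (one only has to check, as the paper does, that $\tau(K,\omega)$ is bounded for $\omega$ compact in $\Omega_C$, so that the measure of $K$ on $\omega$ is already captured by some finite truncation $K_k$).
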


Although the measure $\varphi$ in this theorem is finite, we cannot assert that the body $K$ with $S_{d-1}(K,\cdot)=\varphi$ is $C$-full. In fact, if $d\ge 3$, one can construct $C$-close sets with finite surface area measure for which $C\setminus K$ is unbounded. An example is given in Section \ref{sec4}.

We remark that there are several unsolved problems. Necessary and sufficient conditions for surface area measures are unknown for  $C$-full convex sets as well as for $C$-close or $C$-asymptotic sets. It is also unknown (as already mentioned  in \cite{Sch18}) whether Theorem B can be extended to $C$-asymptotic sets.

Theorem B implies, in particular, the uniqueness of the $C$-full set whose existence is guaranteed by Theorem A. The second aim of this note is to improve this uniqueness result by a stability assertion. To formulate it, we need a metric for $C$-full sets and a metric for finite measures on $\Omega_C$. We can define the Hausdorff distance of $C$-full sets $K,L$ in the same way as for compact sets, namely by
$$ d_H(K,L) = \max\{\sup_{x\in K} \inf_{y\in L} \|x-y\|, \sup_{x\in L} \inf_{y\in K} \|x-y\|\}.$$
For a set $A\subseteq \Omega_C$ and for $\varepsilon>0$, let
$$ A_\varepsilon:=\{y\in\Omega_C:\|x-y\|<\varepsilon\mbox{ for some }x\in A\}.$$
For two finite Borel measures $\mu,\nu$ on $\Omega_C$, their L\'evy--Prokhorov distance is defined by
$$ \delta_{LP}(\mu,\nu):= \inf\{\varepsilon>0: \mu(A)\le \nu(A_\epsilon)+\varepsilon,\, \nu(A)\le \mu(A_\epsilon)+\varepsilon\;\forall\,A\in\B(\Omega_C)\}.$$
This defines a metric, which metrizes the weak convergence of finite measures on $\B(\Omega_C)$ (cf. \cite[Thm. 6.8]{Bil99}). The L\'evy--Prokhorov distance of surface area measures of convex bodies plays an important role in a characterization of the Blaschke addition by Gardner, Parapatits and Schuster \cite{GPS14}. It appears also in a stability theorem for convex bodies by Hug and Schneider \cite[Thm. 3.1]{HS02}, after which the following theorem is modelled.

\begin{theorem}\label{T1.2} 
Let $\omega$ be a compact subset of $\Omega_C$, and let $K,L$ be $C$-full sets whose surface area measures are concentrated on $\omega$.  There is a constant $c$, depending only on $C,\omega$ and an upper bound for $S_{d-1}(K,\cdot), S_{d-1}(L,\cdot)$, such that
$$ d_H(K,L)\le c\delta_{LP}(S_{d-1}(K,\cdot),S_{d-1}(L,\cdot))^{1/d}.$$
\end{theorem}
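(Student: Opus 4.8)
The plan is to imitate the Hug--Schneider stability theorem step by step, replacing the ordinary volume and mixed volumes of convex bodies by the covolume $\overline V(M):=\Ha^d(C\setminus M)$ of a $C$-full set $M$ and the mixed covolumes of \cite{Sch18}, for which one has the representations $\overline V(M)=\frac1d\int_{\Omega_C}(-h(M,u))\,S_{d-1}(M,\D u)$ and $\overline V(L,\dots,L,K)=\frac1d\int_{\Omega_C}(-h(K,u))\,S_{d-1}(L,\D u)$. First I would reduce the claim to support functions: since $K,L$ are $C$-full they coincide with $C$ outside a bounded set, so they differ only in a bounded region and $d_H(K,L)=\max_{u\in\overline{\Omega_C}}|h(K,u)-h(L,u)|$, the difference vanishing on $\bd\Omega_C$ because there $h(K,\cdot)=h(L,\cdot)=h(C,\cdot)=0$. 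Write $\Delta:=d_H(K,L)$, let the maximum be attained at $u_0$, and assume without loss of generality that $h(L,u_0)-h(K,u_0)=\Delta$. The hypotheses (both surface area measures concentrated on the fixed compact set $\omega$ and bounded in total mass) yield the a priori bounds that play the role of the inradius/circumradius bounds in the classical theorem: the covolumes $\overline V(K),\overline V(L)$ are bounded, the support functions are Lipschitz on $\overline{\Omega_C}$ with a common constant, and $\Delta$ is bounded above by a constant depending only on $C,\omega$ and the mass bound.

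The heart of the argument is a two-sided estimate for the Minkowski-type quantity $\Phi:=\overline V(L,\dots,L,K)-\overline V(L)=\frac1d\int_{\Omega_C}\bigl(h(L,u)-h(K,u)\bigr)\,S_{d-1}(L,\D u)$. For the lower bound I would prove $\Phi\ge c_1\Delta^{d}$, and this is where the exponent $1/d$ is born. The set $L$ protrudes beyond the supporting halfspace of $K$ with normal $u_0$ in a convex cap of height $\Delta$. The crucial point is that this cap cannot be a thin spike: a spike in direction $u_0$ would have shoulder normals tilted by nearly a right angle from $u_0$, hence lying either outside $\Omega_C$ (impossible, since $L\subset C$) or in $\Omega_C\setminus\omega$ (impossible, since $S_{d-1}(L,\cdot)$ is concentrated on the fixed compact $\omega\subset\Omega_C$). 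Thus $C$ and $\omega$ together force the cap to be fat, its base has diameter comparable to $\Delta$, its shoulder carries surface area measure at least $c_2\Delta^{d-1}$ (and, the shoulder normals surrounding $u_0$ and lying in $\omega$, one also gets $u_0\in\omega$), and the protrusion over the upper part of the shoulder is of order $\Delta$; integrating gives $\Phi\ge c_1\Delta^{d}$. I expect this fat-cap estimate to be the main obstacle, since it is the only place where the geometry of the cone and the compactness of $\omega$ must be quantitatively exploited.

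For the upper bound I would show $\Phi\le c_4\,\delta_{LP}$, where $\delta_{LP}$ abbreviates $\delta_{LP}(S_{d-1}(K,\cdot),S_{d-1}(L,\cdot))$. The functions $h(L,\cdot)$ and $g:=h(L,\cdot)-h(K,\cdot)$ are Lipschitz on $\overline{\Omega_C}$ with fixed constants and are bounded (by the a priori bound on $\Delta$), so the standard domination of integrals against weakly close finite measures gives $\bigl|\int f\,\D\bigl(S_{d-1}(K,\cdot)-S_{d-1}(L,\cdot)\bigr)\bigr|\le c_3\,\delta_{LP}$ for $f\in\{h(L,\cdot),g\}$. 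The difficulty, exactly as classically, is that surface area measures are not monotone, so a naive comparison of masses on a cap around $u_0$ cancels; to avoid this I would couple the mixed covolume to the covolumes through the reversed Minkowski inequality for coconvex sets of \cite{KT14,Sch18}, namely $\overline V(L,\dots,L,K)^{d}\le\overline V(L)^{d-1}\overline V(K)$. Together with the concavity estimate $\overline V(K)^{1/d}-\overline V(L)^{1/d}\le\frac1d\overline V(L)^{(1-d)/d}\bigl(\overline V(K)-\overline V(L)\bigr)$ this yields $\Phi\le\frac1d\bigl(\overline V(K)-\overline V(L)\bigr)$, the constant being exactly $1/d$ and independent of the (possibly small) covolumes.

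Finally I would close the estimate by writing $\overline V(K)-\overline V(L)=\frac1d\int h(L,\cdot)\,\D\bigl(S_{d-1}(L,\cdot)-S_{d-1}(K,\cdot)\bigr)+\frac1d\int g\,\D S_{d-1}(K,\cdot)$ and, comparing the last integral with $\int g\,\D S_{d-1}(L,\cdot)=d\Phi$ via the bounded-Lipschitz estimate above, deducing $\overline V(K)-\overline V(L)\le \Phi+c'\delta_{LP}$. Inserting this into $\Phi\le\frac1d(\overline V(K)-\overline V(L))$ gives $\Phi(1-\tfrac1d)\le\frac{c'}{d}\,\delta_{LP}$, so $\Phi\le c_4\,\delta_{LP}$; the absorption works precisely because the Minkowski constant is $1/d<1$ for $d\ge2$. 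Combining with the lower bound $\Phi\ge c_1\Delta^{d}$ yields $\Delta^{d}\le (c_4/c_1)\,\delta_{LP}$, that is, $d_H(K,L)\le c\,\delta_{LP}^{1/d}$ with $c$ depending only on $C,\omega$ and the mass bound, as asserted.
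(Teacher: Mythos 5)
Your reduction of the upper estimate to the reversed Minkowski inequality is coherent, but the lower bound $\Phi\ge c_1\Delta^{d}$ --- the step on which the whole argument rests and which produces the exponent $1/d$ --- is false, and the very inequality you invoke shows this. Take two $C$-full sets $K\ne L$ with surface area measures concentrated on $\omega$ and with equal covolumes $\overline V(K)=\overline V(L)$ (produce two non-homothetic such sets from Theorem A and rescale one; dilations preserve $C$-fullness and the support of the surface area measure). The reversed Minkowski inequality $\overline V(L,\dots,L,K)^{d}\le\overline V(L)^{d-1}\overline V(K)$ then gives $\overline V(L,\dots,L,K)\le\overline V(L)$, i.e.\ $\Phi\le 0$, and the same holds with $K$ and $L$ interchanged, so your ``without loss of generality'' does not rescue the claim; yet $\Delta=d_H(K,L)>0$. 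The defect in the fat-cap argument is a sign problem: in $\Phi=\frac1d\int_\omega\bigl(h(L,u)-h(K,u)\bigr)\,S_{d-1}(L,\D u)$ the integrand is bounded below by $\Delta/2$ only on a neighborhood of $u_0$ of radius of order $\Delta$; elsewhere on $\omega$ it can be as negative as $-\Delta$, and the mass of $S_{d-1}(L,\cdot)$ sitting there can make the integral negative. Your estimate accounts only for the positive contribution. This is precisely why the classical stability proof does not bound $V(K,L,\dots,L)-V_d(L)$ from below by a power of the deviation, but instead controls the Minkowski \emph{deficit} via Diskant's inequality; to make your intrinsic route work you would need to prove a stability version of the \emph{reversed} Minkowski inequality for coconvex sets, which is a genuinely new ingredient, not a routine adaptation.

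The paper avoids all of this by truncating: it sets $K_t:=K\cap H^-(w,t)$ and $L_t:=L\cap H^-(w,t)$ for a $t$ controlled by $C,\omega$ and the mass bound, checks that these are convex bodies with inradius and circumradius bounded in terms of the same data, that their surface area measures coincide outside $\omega$ (on $\mathrm{bd}\,C^\circ$, at $w$, and on the remaining set of normals) and agree on $\omega$ with those of $K,L$, so that
$d\,|V_d(K_t)-V(K_t,L_t,\dots,L_t)|=\bigl|\int_\omega h(K,\cdot)\,\D(S_{d-1}(K,\cdot)-S_{d-1}(L,\cdot))\bigr|\le c\,\varepsilon$
and symmetrically, and then quotes the classical stability theorem for convex bodies \cite[Thm.~8.5.1, Lemma~8.5.2]{Sch14}. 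Your a priori bounds (the $BL$-norm estimate for $h(K,\cdot)$ on $\omega$, the boundedness of $\mathrm{bd}\,K\cap\mathrm{int}\,C$) are exactly the ones the paper establishes, so the front end of your argument is sound; but as it stands the core estimate does not prove the theorem.
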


After some preliminaries in Section \ref{sec2}, Theorem \ref{T1.1} is proved in Section \ref{sec3} and Theorem \ref{T1.2} in Section \ref{sec5}. Section \ref{sec4} contains examples and a necessary condition.

\section{Preliminaries}\label{sec2}

By assumption, the fixed cone $C$ is pointed, hence its polar cone $C^\circ$ has nonempty interior. Therefore, we can choose a unit vector $-w\in{\rm int}\,C^\circ$, and then $\langle w,x\rangle >0$ for all $x\in C\setminus\{o\}$. We fix this vector $w$ in the following and write
$$ H(w,t):= \{x\in \R^d:\langle w,x\rangle =t\}, \qquad H^-(w,t):= \{x\in \R^d:\langle w,x\rangle \le t\}$$for $t\in\R$. Similar notation is used for other unit vectors. We abbreviate
$$ C_t:= C\cap H^-(w,t) \quad\mbox{for } t>0.$$
Thus, the sets $C_t$ are bounded. The Hausdorff distance of $C$-full sets defined above satisfies
$$ d_H(K,L) = d_H(K\cap C_t,L\cap C_t)$$
for all sufficiently large $t$, where on the right side we have the familiar Hausdorff distance of convex bodies.

Let $K$ be a $C$-asymptotic set. We have already defined the surface area measure and the support function of $K$.  The surface area measure is defined on $\Omega_C$, and we restrict the support function to the closure of $\Omega_C$. Then it is finite, and $h(K,\cdot)\equiv 0$ if and only if $K=C$. 

In the special case of a $C$-full set $K$, the volume (Lebesgue measure) of the coconvex set $C\setminus K$ is given by
\begin{equation}\label{2.2} 
V_d(C\setminus K) = -\frac{1}{d} \int_{\Omega_C} h(K,u)\,S_{d-1}(K,\D u).
\end{equation}
This is Lemma 1 in \cite{Sch18}.

We shall repeatedly need the following estimates.

\begin{lemma}\label{L2.0}
Let $K$ be a $C$-asymptotic set. If the largest ball $B$ with center $o$ and $B\cap{\rm int}\,K=\emptyset$ has radius $r$, then
\begin{equation}\label{2.a} 
-r\le h(K,u) \le 0\quad\mbox{for }u\in\Omega_C.
\end{equation}

If $S_{d-1}(K,\cdot)\le b<\infty$, then
\begin{equation}\label{2.b} 
r\le c_1
\end{equation}
with a constant $c_1$ depending only on $C$ and $b$.

Suppose that $\omega$ is a compact subset of $\Omega_C$. Then there is a number $a>0$, depending only on $C$ and $\omega$, such that
\begin{equation}\label{2.c}
a\|x\|\le|\langle x,u\rangle|\quad\mbox{for }x\in C \mbox{ and }u\in\omega.
\end{equation}
\end{lemma}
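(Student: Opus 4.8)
The plan is to treat the three estimates separately, the first and third being short.

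For \eqref{2.a}, note that $u\in\Omega_C\subseteq C^\circ$ and $K\subseteq C$ give $\langle u,y\rangle\le 0$ for every $y\in K$, hence $h(K,u)=\sup_{y\in K}\langle u,y\rangle\le 0$; in particular $o\notin\inn K$, so $r$ is the Euclidean distance of $o$ from $K$, attained at a nearest point $p\in K$ with $\|p\|=r$, and $h(K,u)\ge\langle u,p\rangle\ge-\|p\|=-r$ by Cauchy--Schwarz. For \eqref{2.c}, the sets $C\cap\Sd$ and $\omega$ are compact and the function $(x,u)\mapsto-\langle x,u\rangle$ is continuous and strictly positive on $(C\cap\Sd)\times\omega$ (strictly, because $u\in\inn C^\circ$ forces $\langle x,u\rangle<0$ for $x\in C\setminus\{o\}$); hence it attains a positive minimum $a>0$ there, and homogeneity extends the inequality to all $x\in C$.

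The substance is \eqref{2.b}. First I fix a unit vector $e$ with $e\in\inn C$ and $\langle e,x\rangle>0$ for all $x\in C\setminus\{o\}$ (equivalently $-e\in\inn C^\circ$); such a vector exists because $C$ and its dual cone $-C^\circ$ are both pointed with interior points, so a separation argument shows that $\inn C$ meets the interior of $-C^\circ$. Put $\gamma:=\min\{\langle e,x\rangle:x\in C\cap\Sd\}>0$, so that $\langle e,y\rangle\ge\gamma\|y\|$ for $y\in C$, and set $s:=\inf_{y\in K}\langle e,y\rangle$. Since $\|y\|\ge r$ for all $y\in K$, we obtain $s\ge\gamma r$, and I will bound the surface area measure from below by a quantity of order $s^{d-1}$.

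The tool is a shadow (projection) estimate in the direction $e$. Because $K$ is $C$-asymptotic and $K\subseteq C$, its recession cone equals $C$; as $e\in\inn C$ this gives $\proj_{e^\perp}(C)=e^\perp$ and guarantees that every line $z+\R e$ meets $K$. For $u\in\Omega_C$ we have $\langle u,e\rangle<0$, so the points of $\tau(K,\Omega_C)$ are lowest points of $K$ in the direction $e$; hence $\proj_{e^\perp}$ is injective on $\tau(K,\Omega_C)$, and since orthogonal projection does not increase $(d-1)$-dimensional Hausdorff measure,
\begin{equation*}
S_{d-1}(K,\Omega_C)=\Ha^{d-1}(\tau(K,\Omega_C))\ge\Ha^{d-1}\bigl(\proj_{e^\perp}\tau(K,\Omega_C)\bigr).
\end{equation*}
To bound the shadow below, take $z\in\relint\proj_{e^\perp}(C\cap H(e,s))$; then $z+se\in\inn C$, so the lowest point $q_z=z+\beta e$ (with $\beta\ge s$) of $K$ on the line $z+\R e$ satisfies $q_z\in\inn C\cap\bd K$. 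Were some outer normal $u_z$ at $q_z$ on $\bd C^\circ$, then (using $\rec K=C$) $K$ would contain an unbounded face through $q_z$ with normal $u_z$, forcing points of $\bd C$ at infinity to stay at a fixed positive distance from $K$ and contradicting $C$-asymptoticity; hence every normal at $q_z$ lies in $\inn C^\circ$, so $q_z\in\tau(K,\Omega_C)$ and $z\in\proj_{e^\perp}\tau(K,\Omega_C)$. Thus the shadow contains $\relint\proj_{e^\perp}(C\cap H(e,s))$, whose measure equals $\Ha^{d-1}(C\cap H(e,s))=s^{d-1}\Ha^{d-1}(C\cap H(e,1))$ by conical scaling. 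Combining the displays,
\begin{equation*}
b\ge S_{d-1}(K,\Omega_C)\ge\Ha^{d-1}(C\cap H(e,1))\,(\gamma r)^{d-1},
\end{equation*}
which yields \eqref{2.b} with $c_1=\gamma^{-1}\bigl(b/\Ha^{d-1}(C\cap H(e,1))\bigr)^{1/(d-1)}$, a constant depending only on $C$ and $b$.

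I expect the main obstacle to be the lower bound for the shadow, specifically the verification that over the relative interior of the cone cross-section the lowest boundary point of $K$ carries a normal in the \emph{open} set $\Omega_C$ rather than on its boundary $\bd C^\circ$ (which would correspond to unbounded flat pieces of $K$ running parallel to $\bd C$). This is precisely the step where the identity $\rec K=C$ and the $C$-asymptotic hypothesis are indispensable.
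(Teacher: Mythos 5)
Your proposal is correct. For \eqref{2.a} and \eqref{2.c} you argue essentially as the paper does: the paper phrases \eqref{2.a} as ``every supporting hyperplane of $K$ must meet $B$, else $B$ could be enlarged,'' while you use the nearest point $p\in K$ to $o$ with $\|p\|=r$ --- the same fact in dual form; and \eqref{2.c} is the same compactness/positivity argument in both. For \eqref{2.b}, however, you take a genuinely different route. The paper retracts $\bd K\cap\inn C$ radially onto the sphere $\bd B$ via the $1$-Lipschitz map $x\mapsto rx/\|x\|$, whose image is all of $\bd B\cap\inn C$ by $C$-asymptoticity, and concludes $r^{d-1}\Ha^{d-1}(\Sd\cap\inn C)\le \Ha^{d-1}(\bd K\cap\inn C)=S_{d-1}(K,\Omega_C)\le b$ in two lines. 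You instead project orthogonally along a fixed $e\in\inn C$ with $-e\in\inn C^\circ$ and bound the shadow of $\tau(K,\Omega_C)$ from below by the cone cross-section $C\cap H(e,s)$ at height $s\ge\gamma r$. Your version is longer --- it needs $\rec K=C$ (which does follow from $C$-asymptoticity, as you indicate) and the normal-cone analysis at the lowest points $q_z$ --- but it is careful precisely where the paper is terse: the paper silently identifies $\Ha^{d-1}(\bd K\cap\inn C)$ with $S_{d-1}(K,\Omega_C)=\Ha^{d-1}(\tau(K,\Omega_C))$, whereas you verify explicitly that the boundary points you count carry normals in the \emph{open} set $\Omega_C$; your exclusion of normals $u_z\in\bd C^\circ$ is sound, since such a normal gives a direction $v\in\bd C\cap u_z^\perp$ with $K\subset\{y:\langle u_z,y\rangle\le\langle u_z,q_z\rangle\}$ and $\langle u_z,q_z\rangle<0$ (as $q_z\in\inn C$), so the ray $\R_{\ge0}v\subset\bd C$ stays at positive distance from $K$, contradicting $C$-asymptoticity. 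Both arguments produce a constant $c_1$ depending only on $C$ and $b$; the paper's is shorter, yours buys a self-contained justification that the relevant surface lies in $\tau(K,\Omega_C)$.
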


\begin{proof}
Let $K$ and $B$ be as in the lemma. Let $H$ be a supporting hyperplane of $K$, and suppose that $H\cap B=\emptyset$. Since ${\rm int}\,K$ lies entirely in the open halfspace bounded by $H$ that does not contain $o$, the ball $B$ can be increased without intersecting ${\rm int}\,K$, a contradiction. Thus, any supporting hyperplane of $K$ intersects $B$, which means that (\ref{2.a}) holds.

Suppose that $S_{d-1}(K,\cdot)\le b<\infty$. The mapping from ${\rm bd}\,K\cap{\rm int}\,C$ to ${\rm bd}\,B$ that is defined by $x\mapsto rx/\|x\|$ has Lipschitz constant 1. Its image is all of ${\rm bd}\,B\cap{\rm int}\,C$, since $K$ is a $C$-asymptotic set. It follows that
\begin{eqnarray*}
&& r^{d-1}\Ha^{d-1}(\Sd\cap {\rm int}\,C) = \Ha^{d-1}({\rm bd }\,B\cap{\rm int}\,C)\\
&& \le  \Ha^{d-1}({\rm bd}\,K\cap{\rm int}\,C)= S_{d-1}(K,\Omega_C) \le b,
\end{eqnarray*}
from which (\ref{2.b}) follows.

Now suppose that $\omega$ is a compact subset of $\Omega_C$. We note that $\omega$ has a positive distance from the boundary of $\Omega_C$. Hence, there is a number $a>0$ such that $\langle x,u\rangle \le -a$ for all $x\in C\cap\Sd$ and all $u\in\omega$. This yields (\ref{2.c}).
\end{proof}

Although the surface area measure of a $C$-asymptotic set $K$ is in general infinite, it is finite on compact sets. In fact, let $\omega$ be a compact subset of $\Omega_C$. For $u\in\omega$ and $x\in \tau(K,u)$, it follows from (\ref{2.c}) and (\ref{2.a}) that
$$ a\|x\|\le|\langle x,u\rangle|=|h(K,u)|\le r.$$
This shows that $\tau(K,\omega)$ is bounded and, hence, that $S_{d-1}(K,\omega)<\infty$.

For the proof of Theorem \ref{T1.2} we shall need an analytic inequality. Let $\omega$ be a nonempty compact subset of $\Omega_C$. For a continuous real function $f$ on $\omega$, we define
$$ \|f\|_L := \sup_{u,v\in\omega,\,u\not=v} \frac{|f(u)-f(v)|}{\|u-v\|},\qquad
\|f\|_\infty := \sup_{u\in\omega} |f(u)|,$$
$$ \|f\|_{BL} := \|f\|_L+\|f\|_\infty.$$
Note that $\|\cdot\|_{BL}$ depends on $\omega$, though this is not shown in the notation.

The following lemma extends a known estimate for probability measures to finite measures. The simple extension argument can be found in \cite[Sect. 3]{HS02}; or see \cite{Sch14}, proof of Theorem 8.5.3.

\begin{lemma}\label{L2.1} 
Let $\mu,\nu$ be finite Borel measues on $\omega$. Under the assumptions above, there is a constant $c_0$, depending only on the total measures $\mu(\omega),\nu(\omega)$ such that
$$ \left| \int_\omega f\,\D(\mu-\nu)\right| \le c_0\|f\|_{BL}\cdot \delta_{LP}(\mu,\nu).$$
\end{lemma}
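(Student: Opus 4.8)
The plan is to give a direct, self-contained argument via the layer-cake (distribution function) representation, which simultaneously reproves the probability-measure estimate and carries out the extension to finite measures of arbitrary total mass. Write $a:=\mu(\omega)$ and $b:=\nu(\omega)$. The first fact I would record is that, taking $A=\omega$ in the definition of $\delta_{LP}$ and using that $\mu,\nu$ are concentrated on $\omega$ (so $\mu(\omega_\varepsilon)=a$ and $\nu(\omega_\varepsilon)=b$), one obtains $|a-b|\le\delta_{LP}(\mu,\nu)$. This elementary bound is what renders the differing total masses harmless.

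Next I would reduce to nonnegative $f$. For any constant $m$ we have $\int_\omega f\,\D(\mu-\nu)=\int_\omega (f-m)\,\D(\mu-\nu)+m(a-b)$; choosing $m=\min_\omega f$ gives $g:=f-m\ge 0$ with $\|g\|_L=\|f\|_L$ and $\|g\|_\infty\le 2\|f\|_\infty$, while the extra term is controlled by $|m|\,|a-b|\le \|f\|_\infty\,\delta_{LP}(\mu,\nu)$. Hence it suffices to bound $\int_\omega g\,\D(\mu-\nu)$ for $g\ge 0$.

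For the main estimate, fix $\varepsilon>\delta_{LP}(\mu,\nu)$ and write $\int_\omega g\,\D\mu=\int_0^\infty \mu(\{g>t\})\,\D t$. The geometric heart of the argument is the Lipschitz inclusion $(\{g>t\})_\varepsilon\cap\omega\subseteq\{g>t-\|g\|_L\varepsilon\}$: if $y\in\omega$ lies within $\varepsilon$ of some $x$ with $g(x)>t$, then $g(y)>t-\|g\|_L\varepsilon$. Combined with $\mu(\{g>t\})\le\nu((\{g>t\})_\varepsilon)+\varepsilon$ this gives $\mu(\{g>t\})\le\nu(\{g>t-\|g\|_L\varepsilon\})+\varepsilon$. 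Integrating over $t\in[0,\|g\|_\infty]$ and performing the substitution $s=t-\|g\|_L\varepsilon$, the bulk integrals cancel and, after discarding a nonnegative boundary term, one is left with the contribution over $s\in[-\|g\|_L\varepsilon,0)$, where $\{g>s\}=\omega$ and hence $\nu(\{g>s\})=b$. This yields $\int_\omega g\,\D(\mu-\nu)\le(b\|g\|_L+\|g\|_\infty)\varepsilon$. Feeding this back through the reduction and letting $\varepsilon\downarrow\delta_{LP}(\mu,\nu)$ gives $\int_\omega f\,\D(\mu-\nu)\le(b\|f\|_L+3\|f\|_\infty)\delta_{LP}(\mu,\nu)$; applying the same to $-f$ produces the symmetric bound with $b$ replaced by $a$, so the claim follows with $c_0=\max\{a,b\}+3$.

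I expect the only genuine subtlety to be the bookkeeping forced by the unequal total masses: the coefficient of $\|f\|_L$ is precisely the total mass (this is why $c_0$ must depend on $\mu(\omega),\nu(\omega)$), and the additive normalization of $f$ is not free but costs the term $m(a-b)$. Both are absorbed by the bound $|a-b|\le\delta_{LP}(\mu,\nu)$ noted at the outset, while the Lipschitz inclusion and the layer-cake computation are the standard route for the probability case. Alternatively one could normalize $\mu,\nu$ to probability measures and invoke the known estimate, but that would require comparing $\delta_{LP}(\mu/a,\nu/b)$ with $\delta_{LP}(\mu,\nu)$, which is messier than the direct argument above.
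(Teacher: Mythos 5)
Your proof is correct, and it is genuinely different in character from what the paper does: the paper does not prove Lemma~\ref{L2.1} at all, but refers to the extension argument in \cite[Sect.~3]{HS02} and to the proof of Theorem~8.5.3 in \cite{Sch14}, where the known probability-measure estimate is adapted to finite measures. You instead give a single self-contained layer-cake argument that proves the comparison between $\delta_{LP}$ and the bounded-Lipschitz functional directly for finite measures of unequal mass. The key steps all check out: $|\mu(\omega)-\nu(\omega)|\le\delta_{LP}(\mu,\nu)$ from $A=\omega$; the reduction to $g=f-\min f\ge 0$ at the cost of $|m|\,|a-b|\le\|f\|_\infty\delta_{LP}$; the inclusion $(\{g>t\})_\varepsilon\cap\omega\subseteq\{g>t-\|g\|_L\varepsilon\}$ (correctly intersected with $\omega$, which matters since $A_\varepsilon$ is defined inside $\Omega_C$); and the substitution producing the boundary contribution $b\|g\|_L\varepsilon$ from $s\in[-\|g\|_L\varepsilon,0)$. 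Your approach makes transparent exactly where the dependence of $c_0$ on the total masses enters (as the coefficient of $\|f\|_L$), which the citation-based route obscures. One cosmetic slip: applying your one-sided estimate to $-f$ with the same ordered pair $(\mu,\nu)$ again yields the coefficient $b=\nu(\omega)$; to obtain the version with $a$ you would swap the roles of $\mu$ and $\nu$. Either way $c_0=\max\{a,b\}+3$ is a valid choice, so the conclusion stands.
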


\section{Proof of Theorem \ref{T1.1}}\label{sec3}

Let $\varphi$ be a finite Borel measure on $\Omega_C$. The idea for the proof of Theorem \ref{T1.1} (following an approach used in \cite{Sch18} for cone-volume measures) is to use Theorem A, where the given measure has compact support. Therefore, we choose a sequence $(\omega_j)_{j\in\N}$ of open subsets of $\Omega_C$ such that ${\rm cl }\,\omega_j\subset \omega_{j+1}$ (where ${\rm cl }$ denotes the closure) for all $j\in \N$ and $\bigcup _{j\in N} \omega_j=\Omega_C$. For each $j\in\N$, the measure $\varphi_j$ defined by $\varphi_j(A) := \varphi(A\cap\omega_j)$ for $A\in\B(\Omega_C)$ is defined on $\Omega_C$ and has compact support. Therefore, Theorem A can be applied to it. It yields a $C$-full set $L_j$ (uniquely determined according to Theorem B) such that $S_{d-1}(L_j,\cdot)=\varphi_j$. 

We must show that the sets $L_j$ do not `disappear to infinity' as $j\to\infty$. Let $j\in\N$ be given. Let $B$ be the largest ball with center $o$ such that $B\cap{\rm int}\,L_j=\emptyset$; let $r$ be its radius. Then there is a point $z\in B\cap L_j$. From (\ref{2.b}) and $S_{d-1}(L_j,\Omega_C) =\varphi_j(\Omega_C)\le \varphi(\Omega_C)$ it follows that $r\le t_0$ with a constant $t_0$ that depends only on $C$ and $\varphi(\Omega_C)$ (and not on $j$). If we choose $t_1>t_0$, then $z\in H^-(w,t_1)$. This shows that
\begin{equation}\label{3.1}
L_j\cap H^-(w,t_1) \not=\emptyset\quad\mbox{for all }j\in\N.
\end{equation}

Let $(t_k)_{k\in \N}$ be an increasing sequence of positive numbers with $t_k\uparrow \infty$ as $k\to\infty$. By (\ref{3.1}) we have $L_j\cap C_{t_1}\not=\emptyset$ for $j\in\N$, hence the bounded sequence $(L_j\cap C_{t_1})_{j\in\N}$ of convex bodies has a convergent subsequence. Thus, for a subsequence $(j_{1,i})_{i\in\N}$ of $\N$, there is a convex body $K_1$ satisfying
$$ L_{j_{1,i}}\cap C_{t_1}\to K_1\quad\mbox{as } i\to\infty.$$
For the same reason, there are a subsequence $(j_{2,i})_{i\in\N}$ of $(j_{1,i})_{i\in\N}$ and a convex body $K_2$ such that
$$ L_{j_{2,i}}\cap C_{t_2}\to K_2\quad\mbox{as } i\to\infty.$$
By induction, we obtain for each $k\in\N$ a subsequence $(j_{k,i})_{i\in\N}$ of $(j_{k-1,i})_{i\in\N}$ and a convex body $K_k$ such that
$$ L_{j_{k,i}}\cap C_{t_k}\to K_k\quad\mbox{as } i\to\infty.$$
The diagonal sequence $(\ell_i)_{i\in\N}:= (j_{i,i})_{i\in\N}$ then satisfies
$$ L_{\ell_i}\cap C_{t_k} \to K_k \quad\mbox{as }i\to\infty, \mbox{ for each }k\in\N.$$
For $1\le k<m$ we have
$$ L_{\ell_i}\cap C_{t_k}\to K_k,\quad  L_{\ell_i}\cap C_{t_m}\to K_m\quad\mbox{as }i\to\infty.$$
Using \cite[Thm. 1.8.10]{Sch14}, we obtain
\begin{eqnarray*} 
K_k &=& \lim_{i\to\infty}(L_{\ell_i}\cap C_{t_k})= \lim_{i\to\infty}[(L_{\ell_i}\cap C_{t_m})\cap C_{t_k}]\\
&=& \left[\lim_{i\to\infty}(L_{\ell_i}\cap C_{t_m})\right]\cap C_{t_k}=K_m\cap C_{t_k}.
\end{eqnarray*}
Therefore, if we define
$$ K:= \bigcup_{k\in\N} K_k,$$
then
$$ K\cap C_{t_k} = K_k\quad\mbox{for }k\in\N.$$
This implies, in particular, that $K\subset C$ is a closed convex set.

We have to show that
\begin{equation}\label{3.2}
S_{d-1}(K,\cdot)=\varphi.
\end{equation}
 
Let $j\in\N$, and let $\omega\subset\omega_{\ell_j}$ be an open set. Since $\omega$ is contained in a compact subset of $\Omega_C$, the set $\tau(K,\omega)$ is bounded, as shown in the previous section. Hence, there is a number $k\in\N$ with $\tau(K,\omega)=\tau(K_k,\omega)$. From $\lim_{i\to\infty}(L_{\ell_i}\cap C_{t_k})=K_k$ and the weak continuity of the surface area measure, it follows that
$$ S_{d-1}(K_k,\omega)\le \liminf_{i\to\infty} S_{d-1}(L_{\ell_i}\cap C_{t_k},\omega).$$
By the definition of $L_{\ell_i}$ we have $S_{d-1}(L_{\ell_i}\cap C_{t_k},\omega)=\varphi(\omega)$ for sufficiently large $i$, thus
\begin{equation}\label{8.7.3} 
S_{d-1}(K_k,\omega) \le\varphi(\omega).
\end{equation}
If $\beta\subset\omega_{\ell_j}$ is a closed set, then a similar argument yields that
\begin{equation}\label{8.7.4}
S_{d-1}(K_k,\beta)\ge\limsup_{i\to\infty} S_{d-1}(L_{\ell_i}\cap C_{t_k},\beta) =\varphi(\beta).
\end{equation}

Let $\beta\subset\omega_{\ell_j}$ be closed. We choose a sequence $(\eta_r)_{r\in\N}$ of open neighborhoods of $\beta$ with $\eta_r\subset\omega_{\ell_j}$ and $\eta_r\downarrow \beta$ as $r\to\infty$. By (\ref{8.7.3}), we have $S_{d-1}(K_k,\eta_r)\le\varphi(\eta_r)$. Since $\eta_r\downarrow\beta$, this gives $S_{d-1}(K_k,\beta)\le\varphi(\beta)$, and from (\ref{8.7.4}) we then conclude that $S_{d-1}(K_k,\beta)=\varphi(\beta)$.

For a closed set $\beta\subset\Omega_C$ with $\beta\subset\omega_{\ell_j}$ for some $j\in\N$, we have $\tau(K,\omega)=\tau(K_k,\omega)$ for suitable $k\in\N$, hence $S_{d-1}(K,\beta)= S_{d-1}(K_k,\beta)=\varphi(\beta)$. Since $\omega_{\ell_j}\uparrow\Omega_C$ as $j\to\infty$, the equality $S_{d-1}(K,\beta)=\varphi(\beta)$ holds for every closed set $\beta\in\B(\Omega_C)$ and thus for every Borel set in $\B(\Omega_C)$. We have proved the assertion (\ref{3.2}).

It remains to show that $K$ is $C$-close. For this, we note that by (\ref{2.2}) we have, for each $j\in\N$,
\begin{eqnarray*}
V_d(C\setminus L_j) &=& -\frac{1}{d} \int_{\Omega_C} h(L_j,u)\,S_{d-1}(L_j,\D u)\\
&=& -\frac{1}{d}\int_{\omega_j} h(L_j,u)\,\varphi(\D u).
\end{eqnarray*}
It follows from (\ref{2.a}) and (\ref{2.b}) that there is a constant $c_1$, depending only on $C$ and $K$, such that $|h(L_j,\cdot)| \le c_1$. This yields
$$ V_d(C\setminus L_j) \le \frac{c_1}{d}\varphi(\Omega_C).$$
For $i,k\in\N$ we get
$$ V_d(C_{t_k}\setminus L_{\ell_i})\le V_d(C\setminus L_{\ell_i})\le \frac{c_1}{d}\varphi(\Omega_C).$$
Since $L_{\ell_i}\cap C_{t_k}\to K_k$ as $i\to\infty$, this gives
$$ V_d(C_{t_k}\setminus K)= V_d(C_{t_k}\setminus K_k)\le \frac{c_1}{d}\varphi(\Omega_C).$$
This holds for all $k\in\N$, hence we deduce that
$$ V_d(C\setminus K) \le \frac{c_1}{d}\varphi(\Omega_C)<\infty.$$
Thus, $K$ is a $C$-close set. \hfill$\Box$

\section{Examples, and a necessary condition}\label{sec4}

Our first example shows that the $C$-close set $K$ that exists by Theorem \ref{T1.1} need not necessarily be $C$-full; in other words, $K$ can have finite total surface area measure and nevertheless $C\setminus K$ can be unbounded.

To provide an example, we choose $d=3$ and let $C$ be the positive orthant, that is, the positive hull of the standard orthonormal basis of $\R^3$. With respect to this basis, we define points $p_n,q_n$ by their coordinate triples,
$$ p_n=(a_n,0,n-1),\quad q_n=(0,a_n,n-1) \quad\mbox{with } a_n:= \frac{1}{n^2}\quad\mbox{for }n\in\N.$$
Then we let $K$ be the convex hull of the points $p_n,q_n$, $n\in\N$, and the rays $\{(x,0,0):x\ge 1\}$, $\{(0,x,0):x\ge 1\}$. It is easy to check that all points $p_n,q_n$ are vertices of $K$ and that
$$ S_{2}(K,\Omega_C)= \frac{1}{\sqrt{2}} \sum_{n=1}^\infty (a_n+a_{n+1})<\infty.$$
Clearly, $C\setminus K$ is unbounded.

In our second example, $d=2$, and $C$ is the positive orthant in $\R^2$. We define $K$ as the convex hull of the curves
$$ \{(x,x^2): x\le 0\},\quad \{(x,-\sqrt{x}): x\ge 0\}.$$
Clearly, $S_1(K,\omega)<\infty$ if $\omega$ is a compact subset of $\Omega_C$. Every closed convex set with spherical image $\Omega_C$ and surface area measure equal to $S_1(K,\cdot)$ is a translate of $K$ (as can be deduced from \cite[Thm. 8.3.3]{Sch14}). However, no translate of $K$ is contained in $C$. This shows (for $d=2$) that the local finiteness (that is, finiteness on compact subsets of $\Omega_C$) of a Borel measure on $\Omega_C$ is not sufficient to be the surface area measure of a $C$-asymptotic set. The following assertion shows this also for higher dimensions.

For a compact set $\omega\subset\Omega_C$, we denote by $\Delta(\omega)$ the distance of $\omega$ from ${\rm bd}\,\Omega_C$ (the boundary of $\Omega_C$ with respect to $\Sd$),  that is, the smallest angle between a vector $u\in\omega$ and a vector $v\in{\rm bd}\,\Omega_C$.

\begin{theorem}\label{T4.1} 
Let $\varphi$ be a Borel measure on $\Omega_C$. If $\varphi$ is the surface area measure of some $C$-asymptotic convex set $K$, then the function
$$ 
\omega\mapsto\Delta(\omega) ^{d-1}\varphi(\omega),\quad \omega\subset\Omega_C \mbox{ compact},
$$
is bounded.
\end{theorem}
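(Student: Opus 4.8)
The plan is to bound $\Delta(\omega)^{d-1}\varphi(\omega)$ by a constant independent of the compact set $\omega$, by estimating the surface area measure of $K$ carried by $\omega$ in terms of the geometry forced by the $C$-asymptotic condition. The key observation is the one already used in Section \ref{sec2}: for $u\in\omega$ and a boundary point $x\in\tau(K,u)$, inequality (\ref{2.a}) gives $|h(K,u)|=|\langle x,u\rangle|\le r$, where $r$ is the radius of the largest ball centered at $o$ missing the interior of $K$. Since $h(K,\cdot)\le 0$ on $\Omega_C$, this confines the reverse image $\tau(K,\omega)$ to the region $\{x\in C:-r\le\langle x,u\rangle\}$. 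The crucial point is that the constant $a$ in (\ref{2.c}), which controls $a\|x\|\le|\langle x,u\rangle|$, is directly comparable to $\Delta(\omega)$: for $x\in C\cap\Sd$ and $u\in\omega$, the quantity $-\langle x,u\rangle=\langle x,-u\rangle$ is the cosine of the angle between $x$ and $-u$, and the smallest such angle over $x\in C\cap\Sd$, $u\in\omega$ is exactly $\Delta(\omega)$ (since $-u$ ranges over $\Sd\cap\inn C^{\circ\circ}=\Sd\cap\inn C$ reflected appropriately, the extremal configuration is realized at the boundary). Thus one can take $a$ comparable to $\sin\Delta(\omega)$, and hence $\tau(K,\omega)$ lies in a ball of radius $O(r/\Delta(\omega))$.

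First I would make precise the relation between $a$ and $\Delta(\omega)$. For a unit vector $x\in C$ and $u\in\omega$, write $\langle x,u\rangle=-\cos\theta$ where $\theta$ is the angle between $x$ and $-u$; since $-u\in\Sd\cap\inn C^\circ$ lies at spherical distance at least $\Delta(\omega)$ from $\bd\Omega_C$, and $x\in C\cap\Sd$ is at spherical distance at least $\pi/2$ from $-u$ adjusted by the geometry, the minimal $\theta$ equals $\pi/2-(\text{something})$; the clean statement is that $-\langle x,u\rangle\ge\sin\Delta(\omega)\ge\frac{2}{\pi}\Delta(\omega)$. Accepting this, (\ref{2.c}) holds with $a=\frac{2}{\pi}\Delta(\omega)$, so every $x\in\tau(K,\omega)$ satisfies $\|x\|\le r/a\le \pi r/(2\Delta(\omega))$.

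Next I would convert the diameter bound on $\tau(K,\omega)$ into a volume-type bound on $\varphi(\omega)=S_{d-1}(K,\omega)=\Ha^{d-1}(\tau(K,\omega))$. Since $\tau(K,\omega)$ is a subset of the boundary of the convex set $K$ contained in a ball of radius $R:=\pi r/(2\Delta(\omega))$ centered at $o$, its $(d-1)$-dimensional Hausdorff measure is at most the surface area of that ball, namely $\Ha^{d-1}(\bd B^d)\,R^{d-1}=d\kappa_d R^{d-1}$, by the monotonicity of surface area under inclusion of convex bodies (a standard fact; see \cite[Sect. 7.4]{Sch14}). Therefore
\begin{equation}\label{T4.1eq}
\Delta(\omega)^{d-1}\varphi(\omega)\le\Delta(\omega)^{d-1}\cdot d\kappa_d\left(\frac{\pi r}{2\Delta(\omega)}\right)^{d-1}=d\kappa_d\left(\frac{\pi r}{2}\right)^{d-1},
\end{equation}
a bound depending only on $d$ and on $r$, the latter being a fixed characteristic of $K$ alone and independent of $\omega$. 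This gives the desired uniform bound.

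The main obstacle I anticipate is the first step: pinning down the exact comparison between $a$ (equivalently, $\min_{x\in C\cap\Sd,\,u\in\omega}|\langle x,u\rangle|$) and $\Delta(\omega)$. The definition of $\Delta(\omega)$ as the smallest angle between $u\in\omega$ and $v\in\bd\Omega_C$ must be related to angles between $\omega$ and the cone $C$ itself, which requires the polarity identity $(C^\circ)^\circ=C$ and a careful check that the extremal angle is attained in the boundary direction claimed. Once the inequality $-\langle x,u\rangle\ge\sin\Delta(\omega)$ is established, the remaining steps are routine monotonicity estimates.
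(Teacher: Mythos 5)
Your proof is correct. It shares the paper's overall skeleton---first confine $\tau(K,\omega)$ to a region of linear size $O(r/\sin\Delta(\omega))$, then bound its $(d-1)$-dimensional measure by $O((r/\sin\Delta(\omega))^{d-1})$---but executes both steps by different means, and more economically. For the confinement step, the paper runs a spherical-triangle comparison involving a supporting hyperplane $H(u,s)$ with $s\le r$ and the auxiliary hyperplanes $H(w,t)$, $H(v,0)$, concluding $\tau(K,\omega)\subset H^-(w,t)$ with $t\le r/\sin\Delta(\omega)$; you instead quantify the constant in (\ref{2.c}) as $a=\sin\Delta(\omega)$. The inequality you flag as the main obstacle, $-\langle x,u\rangle\ge\sin\Delta(\omega)$ for $x\in C\cap\Sd$ and $u\in\omega$, does hold, and more cleanly than your heuristic suggests: writing $C^\circ=\bigcap_{x\in C\cap\Sd}\{y:\langle y,x\rangle\le 0\}$, the Euclidean distance from $u\in{\rm int}\,C^\circ$ to $\R^d\setminus C^\circ$ equals $\min_{x\in C\cap\Sd}(-\langle u,x\rangle)$, and since ${\rm bd}\,C^\circ$ is a cone with apex $o$, this Euclidean distance is the sine of the angular distance from $u$ to $\Sd\cap{\rm bd}\,C^\circ={\rm bd}\,\Omega_C$, which is at least $\Delta(\omega)$ (and at most $\pi/2$). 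For the measure bound, the paper projects $\tau(K,\omega)$ orthogonally onto $H(w,t)$, compares with $V_{d-1}(C\cap H(w,t))=t^{d-1}V_{d-1}(C\cap H(w,1))$, and must additionally arrange $w\in{\rm int}\,C$ so that $|\langle w,u\rangle|$ is bounded below on $\Omega_C$; you instead use $\tau(K,\omega)\subset{\rm bd}\,K\cap B(o,R)\subset{\rm bd}\,(K\cap B(o,R))$ together with the monotonicity of surface area under inclusion of convex bodies. Your route avoids both the trigonometric construction and the special choice of $w$, while the paper's route yields in passing the slab localization and the weighted estimate $\int_\omega|\langle w,u\rangle|\,S_{d-1}(K,\D u)\le c_4\sin^{1-d}\Delta(\omega)$, which does not need the lower bound on $|\langle w,\cdot\rangle|$. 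Both arrive at $\varphi(\omega)\le c\,\Delta(\omega)^{1-d}$ with $c$ depending only on $d$ and on $r$ (which is finite, since a $C$-asymptotic set necessarily has interior points), hence independent of $\omega$.
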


\begin{proof}
Supppose  $K$ is a $C$-asymptotic convex set with $S_{d-1}(K,\cdot)=\varphi$. Let $\omega\subset\Omega_C$ be a nonempty compact set. 

Let $B$ be the largest ball with center $o$ not meeting ${\rm int}\,K$, let $r$ be its radius. Let $x\in\tau(K,\omega)$, and let $H(u,s)$ be a supporting hyperplane of $K$ at $x$ with outer normal vector $u\in\omega$. This hyperplane must meet $B$, hence its distance $s$ from $o$ satisfies 
$$s\le r.$$ 
Let $y\in H(u,s)\cap C$ be a point for which $\langle w,y\rangle$ is maximal, then $y\in{\rm bd C}$. Let $t=\langle w,y\rangle$, then $y\in H(w,t)$ and $C\cap H(u,s)\subset H^-(w,t)$. Let
$$ E:= H(u,s)\cap H(w,t).$$
The $(d-2)$-plane $E$ is a supporting plane of $C\cap H(w,t)$ in $H(w,t)$. Therefore, there is a supporting hyperplane $H(v,0)$ of the cone $C$ (with outer normal vector $v$) such that $H(v,0)\cap H(w,t)=E$. Let $\alpha$ be the angle between $u$ and $v$. Since $u\in\omega$ and $v\in{\rm bd}\,\Omega_C$, we have 
$$\alpha\ge\Delta(\omega).$$
Let $z$ be the image of $o$ under orthogonal projection to $H(u,s)$, and let $p$ be the image of $z$ under orthogonal projection to $E$. Let $\gamma$ be the angle of the triangle with vertices $o,z,p$ at $p$ Since the vector $z-p$ is orthogonal to $E$, we have 
$$\gamma\ge \alpha.$$ 
To see this, note that for $d=2$ we have $p=y$ and $\gamma=\alpha$. Let $d\ge 3$. The two-dimensional plane through $p$ orthogonal to $E$ contains $z$. We choose $q\in H(v,0)\setminus\{p\}$ such that $p$ is the orthogonal projection of $q$ to $E$. Then the unit vectors $-p/\|p\|, (z-p)/\|z-p\|, (q-p)/\|q-p\|$ are the vertices of a spherical triangle (on a two-dimensional unit sphere) of side lengths $\gamma$ (opposite to a right angle), $\alpha$, and some $\beta$. Then $\cos\gamma=\cos\alpha\cos\beta\le\cos\alpha$, hence $\gamma\ge \alpha$.  It follows that
$$ \sin\Delta(\omega)\le\sin\alpha\le\sin\gamma=\frac{s}{\|p\|}$$
and hence
$$\|p\|\sin\Delta(\omega)\le r.$$
Moreover, 
$$ \langle w,y\rangle=\langle w,p\rangle \le \|p\|\le \frac{r}{\sin\Delta(\omega)}.$$
Since $t=\langle w,y\rangle$, we have
$$ \tau(K,\omega) \subset H^-(w,t)\quad\mbox{and}\quad t\le\frac{c_3}{\sin\Delta(\omega)},$$
with a constant $c_3$ depending on $C,w,K$, but not on $\omega$.

Considering the orthogonal projection of $\tau(K,\omega)$ to the hyperplane $H(w,t)$ and denoting by $V_{d-1}$ the $(d-1)$-dimensional volume, we obtain
$$ \int_\omega |\langle w,u\rangle|\,S_{d-1}(K,\D u) \le V_{d-1}(C\cap H(w,t)) = t^{d-1}V_{d-1}(C\cap H(w,1))\le \frac{c_4}{\sin^{d-1}\Delta(\omega)},$$
with a constant $c_4$ that again depends on $C,w,K$, but not on $\omega$. 

We recall that the unit vector $w$ was chosen such that $-w\in{\rm int}\,C^\circ$. We can choose $w$ such that in addition $w\in {\rm int}\,C$. (In fact, if ${\rm int}\,C\cap{\rm int}(-C^\circ)=\emptyset$, then $C$ and $-C^\circ$ can be separated by a hyperplane, by \cite[Thm. 1.3.8]{Sch14}, hence $C$ and $C^\circ$ lie in the same closed halfspace, a contradiction.) With this choice, we have $|\langle w,u\rangle|\ge c_5$ for all $u\in\Omega_C$, with a constant $c_5$ depending only on $C$ and $w$, and hence $\varphi(\omega) \le c_6\sin^{1-d}\Delta(\omega)$
for each compact subset $\omega\subset\Omega_C$, with a constant $c_6$ that is independent of $\omega$. Further, $\Delta (\omega)\le \pi/2$ and hence $\sin\Delta(\omega)\ge(2/\pi)\Delta(\omega)$. This completes the proof.
\end{proof}

It is easy to construct a measure $\varphi$ on $\Omega_C$ which is finite on compact subsets of $\Omega_C$ but does not satisfy the condition of Theorem \ref{T4.1}. This shows that the statement on pp. 344--345 (``Generalization of the Theorem of Section 7.4'') in Alexandrov \cite{Ale05} is not correct.

\section{Proof of Theorem \ref{T1.2}}\label{sec5}

Let $\omega\subset\Omega_C$ be a nonempty compact set. This set is fixed in the present section, and the norm $\|\cdot\|_{BL}$ introduced in Section \ref{sec2} depends on it. 

\begin{lemma}\label{L5.1}
Let $K$ be a $C$-full set. There exists a constant $c_7$, depending only on $C$, $\omega$ and an upper bound for $S_{d-1}(K,\omega)$, such that
$$ \| h(K,\cdot)\|_{BL} \le c_7.$$
\end{lemma}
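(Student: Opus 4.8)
The plan is to bound the two pieces of $\|h(K,\cdot)\|_{BL}=\|h(K,\cdot)\|_\infty+\|h(K,\cdot)\|_L$ separately. Write $B$ for the largest ball centered at $o$ with $B\cap\inn K=\emptyset$, and $r$ for its radius. The sup-norm part is immediate from the preliminaries: since $\omega\subset\Omega_C$, estimate (\ref{2.a}) gives $\|h(K,\cdot)\|_\infty=\sup_{u\in\omega}|h(K,u)|\le r$, and (\ref{2.b}) bounds $r\le c_1$ with $c_1=c_1(C,b)$, where $b$ is the assumed upper bound for the surface area measure. (In the situation of Theorem \ref{T1.2} the measure is concentrated on $\omega$, so $S_{d-1}(K,\Omega_C)=S_{d-1}(K,\omega)$, and the hypothesis indeed bounds the total measure to which (\ref{2.b}) refers.) Thus $\|h(K,\cdot)\|_\infty\le c_1$.

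The real work is the Lipschitz bound, and the idea is to localize the part of $\bd K$ that carries the normals in $\omega$. Fix $u\in\omega$. By (\ref{2.c}) we have $\langle u,y\rangle\le-a\|y\|$ for all $y\in C$, so $y\mapsto\langle u,y\rangle$ is coercive on $K\subseteq C$ and its supremum $h(K,u)$ is attained at some point $x_u\in\tau(K,u)\subseteq K$. Applying (\ref{2.c}) to $x_u$ and then (\ref{2.a}),
$$ a\|x_u\|\le|\langle x_u,u\rangle|=|h(K,u)|\le r\le c_1, $$
so every such exposed point satisfies $\|x_u\|\le c_1/a$, uniformly in $u\in\omega$, with $a=a(C,\omega)>0$.

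With this uniform bound the Lipschitz estimate is routine: for $u,v\in\omega$, using $x_u\in K$ we get $h(K,v)\ge\langle v,x_u\rangle$, hence
$$ h(K,u)-h(K,v)\le\langle u-v,x_u\rangle\le\|u-v\|\,\|x_u\|\le\frac{c_1}{a}\|u-v\|, $$
and exchanging the roles of $u$ and $v$ yields $\|h(K,\cdot)\|_L\le c_1/a$. Adding the two bounds gives $\|h(K,\cdot)\|_{BL}\le c_1+c_1/a=:c_7$, with $c_7$ depending only on $C$, $\omega$ and $b$.

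The main obstacle is precisely the step that makes this possible: a priori $K$ is unbounded, so $h(K,\cdot)$ need not be Lipschitz on all of $\Omega_C$, and one must confine the exposed points $x_u$, $u\in\omega$, to a fixed bounded region. This is exactly what (\ref{2.c}) provides---it converts the uniform support-function bound $|h|\le r$ into the uniform bound $\|x_u\|\le r/a$---and it is also what guarantees that $h(K,u)$ is attained in the first place. Everything else is a standard support-function computation.
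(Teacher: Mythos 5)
Your proof is correct and follows essentially the same route as the paper's: bound $\|h(K,\cdot)\|_\infty$ by combining (\ref{2.a}) and (\ref{2.b}), then use (\ref{2.c}) to confine the points of $K$ realizing $h(K,u)$ for $u\in\omega$ to a ball of radius $c_1/a$, which yields the Lipschitz constant by the standard support-function computation. Your parenthetical remark reconciling the hypothesis on $S_{d-1}(K,\omega)$ with the total-measure bound needed for (\ref{2.b}) is a point the paper glosses over, and is a welcome addition.
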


\begin{proof} 
By (\ref{2.a}) and (\ref{2.b}) we have
\begin{equation}\label{H1}
|h(K,\cdot)|\le c_1,
\end{equation}
where the constant $c_1$ depends only on $C$, $\omega$ and an upper bound for $S_{d-1}(K,\omega)$.

We need a Lipschitz constant for the function $|h(K,\cdot)|$. Since $\omega$ is a compact subset of $\Omega_C$, there is by (\ref{2.c}) a positive constant $a$, depending only on $C$ and $\omega$, such that
$$ |\langle x,u\rangle|\ge a\|x\|\quad \mbox{for }x\in C \mbox{ and }u\in\omega.$$
Let $x\in K$ and $u\in\omega$ be such that $\langle x,u\rangle=h(K,u)$. Then we obtain
\begin{equation}\label{H1a} 
\|x\|\le \frac{1}{a} |\langle x,u\rangle| = \frac{1}{a}|h(K,u)| \le \frac{c_1}{a}=:c_8.
\end{equation}
Also the constant $c_8$ depends only on $C$, $\omega$ and an upper bound for $S_{d-1}(K,\omega)$

Now let $u,v\in\omega$ and choose $x\in K$ with $h(K,u)=\langle x,u\rangle$. We have $\langle x,v\rangle \le h(K,v)$ and hence 
$$h(K,u)-h(K,v)\le \langle x,u-v\rangle\le\|x\|\|u-v\| \le c_8\|u-v\|.$$
Here $u$ and $v$ can be interchanged. Therefore
\begin{equation}\label{H2} 
\|h(K,\cdot)\|_L \le c_8.
\end{equation}
From (\ref{H1}) and (\ref{H2}) the assertion of the lemma follows.
\end{proof}

\noindent{\em Proof of Theorem} \ref{T1.2}.

Let $K,L$ be $C$-full sets whose surface area measures are concentrated on the given compact set $\omega$. Let $x\in{\rm bd}\,K\cap{\rm int}\, C$. There is an outer normal vector $u\in\omega$ to  $K$ at $x$, hence (\ref{H1a}) shows that $\|x\|\le c_8$, where $c_8$ depends only on $C,\omega$ and an upper bound for $S_{d-1}(K,\omega)$. Thus, ${\rm bd}\,K\cap{\rm int}\,C$, and similarly ${\rm bd}\,L\cap{\rm int}\,C$, is contained in the ball with center $o$ and radius $c_8$ (with $c_8$ increased with respect to $L$, if necessary). We choose $t>0$ so large that
$$ {\rm bd}\,K\cap{\rm int}\,C \subset H^-(w,t),\quad {\rm bd}\,L\cap{\rm int}\,C \subset H^-(w,t)$$
and that
$$ K_t:= K\cap H^-(w,t),\quad L_t:= L\cap H^-(w,t)$$
have inradius at least $1$. How large $t$ has to be chosen to achieve this effect, depends only on $C,\omega$ and an upper bound for $S_{d-1}(K,\omega), S_{d-1}(L,\omega)$. Then there is a number $R$, also depending only on these data, such that $K_t,L_t$ have circumradius at most $R$.

Now we set  $\delta_{LP}(S_{d-1}(K,\cdot), S_{d-1}(L,\cdot))=:\varepsilon$. By Lemmas \ref{L2.1} and \ref{L5.1} we have
$$ \left| \int_\omega h(K,\cdot)\,\D\left(S_{d-1}(K,\cdot)-S_{d-1}(L,\cdot)\right)\right| \le c_0\| h(K,\cdot)\|_{BL}\cdot\varepsilon \le c_0c_8\varepsilon.$$
We have a disjoint decomposition
$$ \Sd = \omega\cup(\Sd\cap {\rm bd}\,C^\circ) \cup \{w\} \cup N$$
with $N:= \Sd\setminus(\omega\cup {\rm bd}\,C^\circ \cup \{w\})$. 
Here,
\begin{eqnarray*} 
h(K_t,u) &=& h(K,u)\quad\mbox{for }u\in\omega,\\
S_{d-1}(K_t,A) &=& S_{d-1}(K,A) \quad\mbox{for } A\in\B(\omega),\\
h(K_t,u) &=& 0\quad\mbox{for }u\in{\rm bd}\,C^\circ,
\end{eqnarray*}
and similarly for $L,L_t$. Further,
$$ S_{d-1}(K_t,\{w\})= V_{d-1}(C\cap H(w,t))=S_{d-1}(L_t,\{w\}),$$
$$ S_{d-1}(K_t,A)= 0 = S_{d-1}(L_t,A)\quad\mbox{for }A\in\B(N),$$
the latter because a boundary point of $K_t$ or $L_t$ with outer normal vector in $N$ is a singular point. Therefore, for the volume and mixed volumes of the convex bodies $K_t$ and $L_t$ we obtain from \cite[(5.19)]{Sch14} that
\begin{eqnarray*}
&& d\left|V_d(K_t)-V(K_t,L_t,\dots,L_t)\right|\\
&& = \left| \int_{\Sd} h(K_t,u)\,\left(S_{d-1}(K_t,\D u)-S_{d-1}(L_t,\D u)\right)\right|\\
&& =\left| \int_\omega h(K,u)\left(S_{d-1}(K,\D u)-S_{d-1}(L,\D u)\right)\right|.
\end{eqnarray*}
We conclude that
\begin{equation}\label{8.4.1}
\left|V_d(K_t)-V(K_t,L_t,\dots,L_t)\right| \le c_9\varepsilon,
\end{equation}
and since $K$ and $L$ may be interchanged, also
\begin{equation}\label{8.4.2}
\left|V_d(L_t)-V(L_t,K_t,\dots,K_t)\right| \le c_9\varepsilon,
\end{equation}
with $c_9=c_0c_8/d$.

As the proof of Theorem 8.5.1 and Lemma 8.5.2 in \cite{Sch14} shows, the two inequalities (\ref{8.4.1}) and (\ref{8.4.2}), together with $\varepsilon\le \varepsilon_0$ for a suitable $\varepsilon_0>0$, are sufficient to obtain an inequality of the form
$$ d_H(K_t,L_t)\le \gamma(c_7\varepsilon)^{1/d}$$
with a constant $\gamma$ that depends only on the dimension $d$, a positive lower bound for the inradius of $K_t,L_t$ and an upper bound for the circumradius of $K_t,L_t$. The restriction $\varepsilon\le \varepsilon_0$ can later be removed by adapting the constant, as remarked in \cite[p. 33]{HS02}. This yields the assertion.  \hspace*{\fill}$\Box$

\noindent Author's address:\\[2mm]
Rolf Schneider\\Mathematisches Institut, Albert-Ludwigs-Universit{\"a}t\\D-79104 Freiburg i.~Br., Germany\\E-mail: rolf.schneider@math.uni-freiburg.de

\end{document}